\newtheorem{theorem}{Theorem}[section]
\newtheorem{lemma}[theorem]{Lemma}
\newtheorem{corollary}[theorem]{Corollary}
\newtheorem{definition}[theorem]{Definition}
\newcommand{\mymod}[3]{#1 \equiv #2 \kern -0.5em \pmod{#3}}
\newcommand{\mynotmod}[3]{#1 \not \equiv #2 \kern -0.6em \pmod{#3}}
\begin{document}

\title[On the third-order Jacobsthal matrix sequences]{On the third-order Jacobsthal and third-order Jacobsthal-Lucas sequences and their matrix representations}

\author[G. Cerda-Morales]{Gamaliel Cerda-Morales}
\address{Instituto de Matem\'aticas, Pontificia Universidad Cat\'olica de Valpara\'iso, Blanco Viel 596, Valpara\'iso, Chile.}
\email{gamaliel.cerda.m@mail.pucv.cl}


\begin{abstract}
In this paper, we first give new generalizations for third-order Jacobsthal $\{J_{n}^{(3)}\}_{n\in \mathbb{N}}$ and third-order Jacobsthal-Lucas $\{j_{n}^{(3)}\}_{n\in \mathbb{N}}$ sequences for Jacobsthal and Jacobsthal-Lucas numbers. Considering these sequences, we define the matrix sequences which have elements of $\{J_{n}^{(3)}\}_{n\in \mathbb{N}}$ and $\{j_{n}^{(3)}\}_{n\in \mathbb{N}}$. Then we investigate their properties.

\vspace{2mm}

\noindent\textsc{2010 Mathematics Subject Classification.} 11B37, 11B39, 15A15.

\vspace{2mm}

\noindent\textsc{Keywords and phrases.} Third-order Jacobsthal number, third-order Jacobsthal-Lucas number, matrix representation, matrix methods, generalized Jacobsthal number.

\end{abstract}



\maketitle


\section {Introduction}


The Jacobsthal numbers have many interesting properties and applications in many fields of science (see, e.g., \cite{Ba}). The Jacobsthal numbers $J_{n}$ are defined by the recurrence relation
\begin{equation}\label{e1}
J_{0}=0,\ J_{1}=1,\ J_{n+1}=J_{n}+2J_{n-1},\ n\geq1.
\end{equation}
Another important sequence is the Jacobsthal-Lucas sequence. This sequence is defined by the recurrence relation $j_{0}=2,\ j_{1}=1,\ j_{n+1}=j_{n}+2j_{n-1},\ n\geq1$. (see, \cite{Hor3}).

In \cite{Cook-Bac} the Jacobsthal recurrence relation (\ref{e1}) is extended to higher order recurrence relations and the basic list of identities provided by A. F. Horadam \cite{Hor3} is expanded and extended to several identities for some of the higher order cases. In particular, third order Jacobsthal numbers, $\{J_{n}^{(3)}\}_{n\geq0}$, and third order Jacobsthal-Lucas numbers, $\{j_{n}^{(3)}\}_{n\geq0}$, are defined by
\begin{equation}\label{ec:5}
J_{n+3}^{(3)}=J_{n+2}^{(3)}+J_{n+1}^{(3)}+2J_{n}^{(3)},\ J_{0}^{(3)}=0,\ J_{1}^{(3)}=J_{2}^{(3)}=1,\ n\geq0,
\end{equation}
and 
\begin{equation}\label{ec:6}
j_{n+3}^{(3)}=j_{n+2}^{(3)}+j_{n+1}^{(3)}+2j_{n}^{(3)},\ j_{0}^{(3)}=2,\ j_{1}^{(3)}=1,\ j_{2}^{(3)}=5,\ n\geq0,
\end{equation}
respectively.

The following properties given for third order Jacobsthal numbers and third order Jacobsthal-Lucas numbers play important roles in this paper (see \cite{Cer,Cer1,Cook-Bac}). 
\begin{equation}\label{e4}
3J_{n}^{(3)}+j_{n}^{(3)}=2^{n+1},
\end{equation}
\begin{equation}\label{e5}
j_{n}^{(3)}-3J_{n}^{(3)}=2j_{n-3}^{(3)},
\end{equation}
\begin{equation}\label{ec5}
J_{n+2}^{(3)}-4J_{n}^{(3)}=\left\{ 
\begin{array}{ccc}
-2 & \textrm{if} & \mymod{n}{1}{3} \\ 
1 & \textrm{if} & \mynotmod{n}{1}{3}%
\end{array}%
\right. ,
\end{equation}
\begin{equation}\label{e6}
j_{n}^{(3)}-4J_{n}^{(3)}=\left\{ 
\begin{array}{ccc}
2 & \textrm{if} & \mymod{n}{0}{3} \\ 
-3 & \textrm{if} & \mymod{n}{1}{3}\\ 
1 & \textrm{if} & \mymod{n}{2}{3}%
\end{array}%
\right. ,
\end{equation}
\begin{equation}\label{e7}
j_{n+1}^{(3)}+j_{n}^{(3)}=3J_{n+2}^{(3)},
\end{equation}
\begin{equation}\label{e8}
j_{n}^{(3)}-J_{n+2}^{(3)}=\left\{ 
\begin{array}{ccc}
1 & \textrm{if} & \mymod{n}{0}{3} \\ 
-1 & \textrm{if} & \mymod{n}{1}{3} \\ 
0 & \textrm{if} & \mymod{n}{2}{3}%
\end{array}%
\right. ,
\end{equation}
\begin{equation}\label{e9}
\left( j_{n-3}^{(3)}\right) ^{2}+3J_{n}^{(3)}j_{n}^{(3)}=4^{n},
\end{equation}
\begin{equation}\label{e10}
\sum\limits_{k=0}^{n}J_{k}^{(3)}=\left\{ 
\begin{array}{ccc}
J_{n+1}^{(3)} & \textrm{if} & \mynotmod{n}{0}{3} \\ 
J_{n+1}^{(3)}-1 & \textrm{if} & \mymod{n}{0}{3}%
\end{array}%
\right. 
\end{equation}
and
\begin{equation}\label{e12}
\left( j_{n}^{(3)}\right) ^{2}-9\left( J_{n}^{(3)}\right)^{2}=2^{n+2}j_{n-3}^{(3)}.
\end{equation}

Using standard techniques for solving recurrence relations, the auxiliary equation, and its roots are given by 
$$x^{3}-x^{2}-x-2=0;\ x = 2,\ \textrm{and}\ x=\frac{-1\pm i\sqrt{3}}{2}.$$ 

Note that the latter two are the complex conjugate cube roots of unity. Call them $\omega_{1}$ and $\omega_{2}$, respectively. Thus the Binet formulas can be written as
\begin{equation}\label{b1}
J_{n}^{(3)}=\frac{2}{7}2^{n}-\frac{3+2i\sqrt{3}}{21}\omega_{1}^{n}-\frac{3-2i\sqrt{3}}{21}\omega_{2}^{n}=\frac{1}{7}\left(2^{n+1}-V_{n}^{(3)}\right)
\end{equation}
and
\begin{equation}\label{b2}
j_{n}^{(3)}=\frac{8}{7}2^{n}+\frac{3+2i\sqrt{3}}{7}\omega_{1}^{n}+\frac{3-2i\sqrt{3}}{7}\omega_{2}^{n}=\frac{1}{7}\left(2^{n+3}+3V_{n}^{(3)}\right),
\end{equation}
respectively. Here, the sequence $\{V_{n}^{(3)}\}_{n\geq 0}$ is defined by
$$V_{n}^{(3)}=\left\{ 
\begin{array}{ccc}
2 & \textrm{if} & \mymod{n}{0}{3} \\ 
-3 & \textrm{if} & \mymod{n}{1}{3} \\ 
1 & \textrm{if} & \mymod{n}{2}{3}%
\end{array}%
\right. .$$

In \cite{Ci1,Ci2}, the authors defined a new matrix generalization of the Fibonacci and Lucas numbers, and using essentially a matrix approach they showed properties of these matrix sequences. The main motivation of this article is to study the matrix sequences of third-order Jacobsthal sequence and third-order Jacobsthal sequence.


\section {The third-order Jacobsthal, third-order Jacobsthal-Lucas sequences and their matrix sequences}

Now, considering these sequences, we define the matrix sequences which have elements of third-order Jacobsthal and third-order Jacobsthal-Lucas sequences.

\begin{definition}
Let $n\geq 0$. The third-order Jacobsthal matrix sequence $\{JM_{n}^{(3)}\}_{n\in \mathbb{N}}$ and third-order Jacobsthal-Lucas matrix sequence $\{jM_{n}^{(3)}\}_{n\in \mathbb{N}}$ are defined respectively by
\begin{equation}\label{d1}
JM_{n+3}^{(3)}=JM_{n+2}^{(3)}+JM_{n+1}^{(3)}+2JM_{n}^{(3)},
\end{equation}
\begin{equation}\label{d2}
jM_{n+3}^{(3)}=jM_{n+2}^{(3)}+jM_{n+1}^{(3)}+2jM_{n}^{(3)},
\end{equation}
with initial conditions $$JM_{0}^{(3)}=\left[
\begin{array}{ccc}
1& 0& 0 \\ 
0&1& 0\\ 
0 & 0&1%
\end{array}%
\right],\  JM_{1}^{(3)}=\left[
\begin{array}{ccc}
1& 1& 2 \\ 
1&0& 0\\ 
0 & 1&0%
\end{array}%
\right],\ JM_{2}^{(3)}=\left[
\begin{array}{ccc}
1& 3& 2 \\ 
1&1& 2\\ 
1 & 0&0%
\end{array}%
\right]$$
and $$jM_{0}^{(3)}=\left[
\begin{array}{ccc}
1& 4& 4 \\ 
2&-1& 2\\ 
1 & 1&-2%
\end{array}%
\right],\  jM_{1}^{(3)}=\left[
\begin{array}{ccc}
5& 5& 2 \\ 
1&4& 4\\ 
2 & -1&2%
\end{array}%
\right],\ jM_{2}^{(3)}=\left[
\begin{array}{ccc}
10& 7& 10 \\ 
5&5& 2\\ 
1 & 4&4%
\end{array}%
\right].$$
\end{definition}

In the rest of this paper, the third-order Jacobsthal and third-order Jacobsthal-Lucas matrix sequences will be denoted by $M_{J,n}^{(3)}$ and $M_{j,n}^{(3)}$ instead of $JM_{n}^{(3)}$ and $jM_{n}^{(3)}$, respectively.

\begin{theorem}\label{t1}
For $n\geq 0$, we have
\begin{equation}\label{p1}
\begin{aligned}
M_{J,n}^{(3)}&=\left(\frac{M_{J,2}^{(3)}+M_{J,1}^{(3)}+M_{J,0}^{(3)}}{(2-\omega_{1})(2-\omega_{2})}\right)2^{n}-\left(\frac{M_{J,2}^{(3)}-(2+\omega_{2})M_{J,1}^{(3)}+2\omega_{2}M_{J,0}^{(3)}}{(2-\omega_{1})(\omega_{1}-\omega_{2})}\right)\omega_{1}^{n}\\
&\ \ + \left(\frac{M_{J,2}^{(3)}-(2+\omega_{1})M_{J,1}^{(3)}+2\omega_{1}M_{J,0}^{(3)}}{(2-\omega_{2})(\omega_{1}-\omega_{2})}\right)\omega_{2}^{n}.
\end{aligned}
\end{equation}
\begin{equation}\label{p2}
\begin{aligned}
M_{j,n}^{(3)}&=\left(\frac{M_{j,2}^{(3)}+M_{j,1}^{(3)}+M_{j,0}^{(3)}}{(2-\omega_{1})(2-\omega_{2})}\right)2^{n}-\left(\frac{M_{j,2}^{(3)}-(2+\omega_{2})M_{j,1}^{(3)}+2\omega_{2}M_{j,0}^{(3)}}{(2-\omega_{1})(\omega_{1}-\omega_{2})}\right)\omega_{1}^{n}\\
&\ \ + \left(\frac{M_{j,2}^{(3)}-(2+\omega_{1})M_{j,1}^{(3)}+2\omega_{1}M_{j,0}^{(3)}}{(2-\omega_{2})(\omega_{1}-\omega_{2})}\right)\omega_{2}^{n}.
\end{aligned}
\end{equation}
\end{theorem}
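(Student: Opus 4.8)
The plan is to treat this as a standard linear-recurrence-with-matrix-coefficients problem, exactly parallel to deriving a Binet-type formula for a scalar third-order recurrence. Since the two formulas \eqref{p1} and \eqref{p2} have the identical shape — only the initial matrices differ — I would prove a single general statement: if a matrix sequence $\{A_n\}$ satisfies $A_{n+3}=A_{n+2}+A_{n+1}+2A_n$, then $A_n$ equals the right-hand side of \eqref{p1} with $M_{J,k}^{(3)}$ replaced by $A_k$ for $k=0,1,2$. Applying this with $A_k=M_{J,k}^{(3)}$ gives \eqref{p1} and with $A_k=M_{j,k}^{(3)}$ gives \eqref{p2}, so no separate work is needed for the Jacobsthal-Lucas case.

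First I would recall that the characteristic polynomial of the recurrence is $x^3-x^2-x-2=(x-2)(x-\omega_1)(x-\omega_2)$, as already noted in the excerpt, with $2,\omega_1,\omega_2$ distinct. Because these matrices all commute (they are polynomials in a single companion-type matrix, or one simply observes that the span of $\{A_0,A_1,A_2\}$ under the recurrence stays within a commutative algebra — but in fact commutativity is not even needed), the general solution of the matrix recurrence is $A_n=P\cdot 2^n+Q\cdot\omega_1^n+R\cdot\omega_2^n$ for constant matrices $P,Q,R$ determined by the three initial values. Concretely, I would write the three equations
\begin{equation}\label{initsys}
A_0=P+Q+R,\quad A_1=2P+\omega_1 Q+\omega_2 R,\quad A_2=4P+\omega_1^2 Q+\omega_2^2 R,
\end{equation}
and solve this $3\times 3$ Vandermonde-type system for $P,Q,R$ in terms of $A_0,A_1,A_2$. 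Solving \eqref{initsys} by elimination (eliminate $P$ and one of $Q,R$) yields exactly
\begin{equation}
P=\frac{A_2+A_1+A_0}{(2-\omega_1)(2-\omega_2)},\quad
Q=-\frac{A_2-(2+\omega_2)A_1+2\omega_2 A_0}{(2-\omega_1)(\omega_1-\omega_2)},\quad
R=\frac{A_2-(2+\omega_1)A_1+2\omega_1 A_0}{(2-\omega_2)(\omega_1-\omega_2)},
\end{equation}
where I have used $\omega_1\omega_2=1$ (indeed $\omega_1,\omega_2$ are the primitive cube roots of unity, so $\omega_1\omega_2=1$ and $\omega_1+\omega_2=-1$) to simplify the coefficients; matching these to the stated formula is a routine check.

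An alternative, cleaner write-up avoids solving the system explicitly: one verifies directly that the claimed right-hand side satisfies the recurrence \eqref{d1} — which is immediate since each of $2^n,\omega_1^n,\omega_2^n$ is a root sequence of $x^3-x^2-x-2$ and the matrix coefficients are constant — and then checks it produces the correct values at $n=0,1,2$. The $n=0$ case reduces to the identity $P+Q+R=A_0$, i.e.\ $\frac{1}{(2-\omega_1)(2-\omega_2)}+\frac{-1}{(2-\omega_1)(\omega_1-\omega_2)}+\frac{1}{(2-\omega_2)(\omega_1-\omega_2)}=1$ times $A_0$ plus cross terms in $A_1,A_2$ that must cancel; the $n=1,2$ cases are analogous scalar identities in $\omega_1,\omega_2$. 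Either way, the only real work — and the main obstacle, if any — is the bookkeeping in the partial-fraction/Vandermonde algebra: confirming that the coefficients of $A_0,A_1,A_2$ collapse to the printed expressions after substituting $\omega_1+\omega_2=-1$, $\omega_1\omega_2=1$. I would carry out that simplification once for $2^n$ (showing the three $2^n$-coefficients of $A_0,A_1,A_2$ are all $\frac{1}{(2-\omega_1)(2-\omega_2)}$), once for $\omega_1^n$, and once for $\omega_2^n$, and then invoke uniqueness of solutions to the third-order recurrence with given initial data to conclude.
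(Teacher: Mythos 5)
Your proposal is correct and follows essentially the same route as the paper: write the general solution of the third-order matrix recurrence as a combination $P\cdot 2^{n}+Q\cdot\omega_{1}^{n}+R\cdot\omega_{2}^{n}$ with constant matrix coefficients, solve the resulting Vandermonde system from the initial values $A_{0},A_{1},A_{2}$ using $\omega_{1}+\omega_{2}=-1$ and $\omega_{1}\omega_{2}=1$, and observe that the Jacobsthal-Lucas case is the same computation with different initial matrices. Your framing as a single general lemma (plus the optional verify-and-check-initial-values alternative) is a slightly cleaner packaging of the identical argument.
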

\begin{proof}
(\ref{p1}): The solution of Eq. (\ref{d1}) is
\begin{equation}\label{e13}
M_{J,n}^{(3)}=c_{1}2^{n}+c_{2}\omega_{1}^{n}+c_{3}\omega_{2}^{n}.
\end{equation}
Then, let $M_{J,0}^{(3)}=c_{1}+c_{2}+c_{3}$, $M_{J,1}^{(3)}=2c_{1}+c_{2}\omega_{1}+c_{3}\omega_{2}$ and $M_{J,2}^{(3)}=4c_{1}+c_{2}\omega_{1}^{2}+c_{3}\omega_{2}^{2}$. Therefore, we have $(2-\omega_{1})(2-\omega_{2})c_{1}=M_{J,2}^{(3)}-(\omega_{1}+\omega_{2})M_{J,1}^{(3)}+\omega_{1}\omega_{2}M_{J,0}^{(3)}$, $(2-\omega_{1})(\omega_{1}-\omega_{2})c_{2}=M_{J,2}^{(3)}-(2+\omega_{2})M_{J,1}^{(3)}+2\omega_{2}M_{J,0}^{(3)}$, $(2-\omega_{2})(\omega_{1}-\omega_{2})c_{3}=M_{J,2}^{(3)}-(2+\omega_{1})M_{J,1}^{(3)}+2\omega_{1}M_{J,0}^{(3)}$. Using $c_{1}$, $c_{2}$ and $c_{3}$ in Eq. (\ref{e13}), we obtain
\begin{align*}
M_{J,n}^{(3)}&=\left(\frac{M_{J,2}^{(3)}+M_{J,1}^{(3)}+M_{J,0}^{(3)}}{(2-\omega_{1})(2-\omega_{2})}\right)2^{n}-\left(\frac{M_{J,2}^{(3)}-(2+\omega_{2})M_{J,1}^{(3)}+2\omega_{2}M_{J,0}^{(3)}}{(2-\omega_{1})(\omega_{1}-\omega_{2})}\right)\omega_{1}^{n}\\
&\ \ + \left(\frac{M_{J,2}^{(3)}-(2+\omega_{1})M_{J,1}^{(3)}+2\omega_{1}M_{J,0}^{(3)}}{(2-\omega_{2})(\omega_{1}-\omega_{2})}\right)\omega_{2}^{n}.
\end{align*}
(\ref{p2}): The proof is similar to the proof of (\ref{p1}).
\end{proof}

The following theorem gives us the $n$-th general term of the sequence given in (\ref{d1}) and (\ref{d2}).

\begin{theorem}\label{t2}
For $n\geq3$, we have
\begin{equation}\label{p3}
M_{J,n}^{(3)}=\left[
\begin{array}{ccc}
J_{n+1}^{(3)}& J_{n}^{(3)}+2J_{n-1}^{(3)}& 2J_{n}^{(3)} \\ 
J_{n}^{(3)}&J_{n-1}^{(3)}+2J_{n-2}^{(3)}& 2J_{n-1}^{(3)}\\ 
J_{n-1}^{(3)} & J_{n-2}^{(3)}+2J_{n-3}^{(3)}&2J_{n-2}^{(3)}%
\end{array}%
\right]
\end{equation}
\begin{equation}\label{p4}
M_{j,n}^{(3)}=\left[
\begin{array}{ccc}
j_{n+1}^{(3)}& j_{n}^{(3)}+2j_{n-1}^{(3)}& 2j_{n}^{(3)} \\ 
j_{n}^{(3)}&j_{n-1}^{(3)}+2j_{n-2}^{(3)}& 2j_{n-1}^{(3)}\\ 
j_{n-1}^{(3)} & j_{n-2}^{(3)}+2j_{n-3}^{(3)}&2j_{n-2}^{(3)}%
\end{array}%
\right]
\end{equation}
\end{theorem}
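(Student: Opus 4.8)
The plan is to prove both formulas \eqref{p3} and \eqref{p4} by induction on $n$, treating the third-order Jacobsthal case in detail and noting that the Jacobsthal-Lucas case is entirely analogous. The base cases will be $n=3,4,5$, since the recurrences \eqref{d1} and \eqref{d2} are of order three, so three consecutive verified cases are needed to launch the induction. For the base cases I would simply compute $M_{J,3}^{(3)}$, $M_{J,4}^{(3)}$, $M_{J,5}^{(3)}$ directly from the defining recurrence $M_{J,n+3}^{(3)}=M_{J,n+2}^{(3)}+M_{J,n+1}^{(3)}+2M_{J,n}^{(3)}$ using the given initial matrices $M_{J,0}^{(3)},M_{J,1}^{(3)},M_{J,2}^{(3)}$, and check entrywise against the claimed form using the values $J_0^{(3)}=0$, $J_1^{(3)}=J_2^{(3)}=1$, $J_3^{(3)}=4$, $J_4^{(3)}=7$, $J_5^{(3)}=13$, etc., together with \eqref{ec:5}. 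One subtlety worth flagging: the claimed closed form is asserted only for $n\ge 3$, yet evaluating the $(3,2)$-entry already requires $J_{n-3}^{(3)}$, so for $n=3$ this is $J_0^{(3)}=0$, which is consistent; I would double-check that the initial matrices $M_{J,1}^{(3)},M_{J,2}^{(3)}$ do \emph{not} themselves fit the pattern (indeed $M_{J,2}^{(3)}$ would need $J_{-1}^{(3)}$), which is precisely why the statement starts at $n=3$.

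For the inductive step, assume \eqref{p3} holds for $n$, $n+1$, $n+2$ (all $\ge 3$); I want to deduce it for $n+3$. By the matrix recurrence, $M_{J,n+3}^{(3)}=M_{J,n+2}^{(3)}+M_{J,n+1}^{(3)}+2M_{J,n}^{(3)}$, so each entry of $M_{J,n+3}^{(3)}$ is the corresponding linear combination of the entries of the three predecessors. For instance the $(1,1)$-entry becomes $J_{n+3}^{(3)}+J_{n+2}^{(3)}+2J_{n+1}^{(3)}$, which equals $J_{n+4}^{(3)}=J_{(n+3)+1}^{(3)}$ by \eqref{ec:5} — wait, by the scalar recurrence \eqref{ec:5} is the wrong label; I mean the defining recurrence \eqref{ec:5} in the paper's numbering for $J_{n+3}^{(3)}$. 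Similarly the $(1,3)$-entry gives $2\big(J_{n+2}^{(3)}+J_{n+1}^{(3)}+2J_{n}^{(3)}\big)=2J_{n+3}^{(3)}$, and the $(1,2)$-entry gives $\big(J_{n+2}^{(3)}+2J_{n+1}^{(3)}\big)+\big(J_{n+1}^{(3)}+2J_{n}^{(3)}\big)+2\big(J_{n}^{(3)}+2J_{n-1}^{(3)}\big)$, which I rearrange as $\big(J_{n+2}^{(3)}+J_{n+1}^{(3)}+2J_{n}^{(3)}\big)+2\big(J_{n+1}^{(3)}+J_{n}^{(3)}+2J_{n-1}^{(3)}\big)=J_{n+3}^{(3)}+2J_{n+2}^{(3)}$, exactly the required $(1,2)$-entry of $M_{J,n+3}^{(3)}$. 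The remaining six entries are handled the same way, each collapsing via one or two applications of the scalar recurrence \eqref{ec:5}.

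The bulk of the work is therefore the routine but careful bookkeeping of nine entrywise identities, each of which reduces to the single fact that $J_m^{(3)}$ satisfies \eqref{ec:5}; the only genuine subtlety is the second column, whose entries have the composite shape $J_k^{(3)}+2J_{k-1}^{(3)}$, so one must first regroup the linear combination into "recurrence-shaped" blocks before applying \eqref{ec:5}, as illustrated above for the $(1,2)$-entry. I expect this regrouping of the middle column to be the main obstacle — not a deep one, but the place where a sign or index slip is most likely. For \eqref{p4} I would remark that the proof is identical verbatim with $J$ replaced by $j$ throughout, since $\{j_n^{(3)}\}$ obeys the same recurrence \eqref{ec:6}; only the base-case numerical checks (using $j_0^{(3)}=2$, $j_1^{(3)}=1$, $j_2^{(3)}=5$, $j_3^{(3)}=10$, $j_4^{(3)}=17$, $j_5^{(3)}=37$, and the given initial matrices $M_{j,0}^{(3)},M_{j,1}^{(3)},M_{j,2}^{(3)}$) differ.
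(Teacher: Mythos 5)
Your strategy is essentially the paper's own: induction using the matrix recurrence \eqref{d1}, with each of the nine entries collapsing via the scalar recurrence \eqref{ec:5} after the regrouping you describe for the middle column; the only structural difference is that the paper anchors the induction at $n=0$ by extending the sequence to negative indices ($J_{-1}^{(3)}=0$, $J_{-2}^{(3)}=\tfrac12$, $J_{-3}^{(3)}=-\tfrac14$, which makes $M_{J,0}^{(3)}$ the identity), whereas you anchor at $n=3,4,5$, which is cleaner given the stated range. Your inductive step, including the regrouping of the $(1,2)$-entry, is correct as written.

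Two concrete corrections to your base case, though. First, your values of the third-order Jacobsthal numbers are wrong: from $J_{0}^{(3)}=0$, $J_{1}^{(3)}=J_{2}^{(3)}=1$ and \eqref{ec:5} one gets $J_{3}^{(3)}=2$, $J_{4}^{(3)}=5$, $J_{5}^{(3)}=9$, not $4,7,13$ (your values of $j_{n}^{(3)}$ are fine). Second, when you actually perform the entrywise check you will find that the $(1,1)$ entry of the printed initial matrix $M_{J,2}^{(3)}$ must be $2$ rather than $1$: this is what the closed form \eqref{p3} gives at $n=2$ (namely $J_{3}^{(3)}=2$), what $M_{J,2}^{(3)}=\big(M_{J,1}^{(3)}\big)^{2}$ requires, and what is needed for the $(1,1)$ entry of $M_{J,3}^{(3)}=M_{J,2}^{(3)}+M_{J,1}^{(3)}+2M_{J,0}^{(3)}$ to equal $J_{4}^{(3)}=5$; with the matrix as printed, your proposed check at $n=3$ fails in that entry. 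Neither point undermines the method, but both must be fixed for the verification you outline to go through.
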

\begin{proof}
(\ref{p3}): Let use the principle of mathematical induction on $n$.
Let us consider $n=0$ in (\ref{ec:5}). We have $J_{-1}^{(3)}=0$, $J_{-2}^{(3)}=\frac{1}{2}$ and $J_{-3}^{(3)}=-\frac{1}{4}$. Then we write
$$M_{J,0}^{(3)}=\left[
\begin{array}{ccc}
J_{1}^{(3)}& J_{0}^{(3)}+2J_{-1}^{(3)}& 2J_{0}^{(3)} \\ 
J_{0}^{(3)}&J_{-1}^{(3)}+2J_{-2}^{(3)}& 2J_{-1}^{(3)}\\ 
J_{-1}^{(3)} & J_{-2}^{(3)}+2J_{-3}^{(3)}&2J_{-2}^{(3)}%
\end{array}%
\right]=\left[
\begin{array}{ccc}
1& 0& 0 \\ 
0&1& 0\\ 
0& 0&1%
\end{array}%
\right].$$
By iterating this procedure and considering induction steps, let us assume that the equality in (\ref{p3}) holds for all $n=k\in \mathbb{N}$. To finish the proof, we have to show that (\ref{p3}) also holds for $n=k+1$ by considering (\ref{ec:5}) and (\ref{d1}). Therefore we get
\begin{align*}
M_{J,k+2}^{(3)}&=M_{J,k+1}^{(3)}+M_{J,k}^{(3)}+2M_{J,k-1}^{(3)}\\
&=\left[
\begin{array}{ccc}
J_{k+2}^{(3)}+J_{k+1}^{(3)}+2J_{k}^{(3)}& J_{k+2}^{(3)}+2J_{k+1}^{(3)}& 2J_{k+1}^{(3)}+2J_{k}^{(3)}+4J_{k-1}^{(3)} \\ 
J_{k+1}^{(3)}+J_{k}^{(3)}+2J_{k-1}^{(3)}&J_{k+1}^{(3)}+2J_{k}^{(3)}& 2J_{k}^{(3)}+2J_{k-1}^{(3)}+4J_{k-2}^{(3)}\\ 
J_{k}^{(3)}+J_{k-1}^{(3)}+2J_{k-2}^{(3)} & J_{k}^{(3)}+2J_{k-1}^{(3)}&2J_{k-1}^{(3)}+2J_{k-2}^{(3)}+4J_{k-3}^{(3)}%
\end{array}%
\right]\\
&=\left[
\begin{array}{ccc}
J_{k+3}^{(3)}& J_{k+2}^{(3)}+2J_{k+1}^{(3)}& 2J_{k+2}^{(3)} \\ 
J_{k+2}^{(3)}&J_{k+1}^{(3)}+2J_{k}^{(3)}& 2J_{k+1}^{(3)}\\ 
J_{k+1}^{(3)} & J_{k}^{(3)}+2J_{k-1}^{(3)}&2J_{k}^{(3)}%
\end{array}%
\right].
\end{align*}
Hence we obtain the result.
If a similar argument is applied to (\ref{p4}), the proof is clearly seen.
\end{proof}

\begin{theorem}\label{t3}
Assume that $x\neq 0$. We obtain,
\begin{equation}\label{p5}
\sum_{k=0}^{n}\frac{M_{J,k}^{(3)}}{x^{k}}=\frac{1}{x^{n}\nu(x)}\left\lbrace
\begin{array}{c}
2M_{J,n}^{(3)}+\left(M_{J,n+2}^{(3)}-M_{J,n+1}^{(3)}\right)x+M_{J,n+1}^{(3)}x^{2}\\
-x^{n+1}\left(M_{J,2}^{(3)}-M_{J,1}^{(3)}-M_{J,0}^{(3)}-\left(M_{J,0}^{(3)}-M_{J,1}^{(3)}\right)x+M_{J,0}^{(3)}x^{2}\right)
\end{array}%
\right\rbrace,
\end{equation}
\begin{equation}\label{p6}
\sum_{k=0}^{n}\frac{M_{j,k}^{(3)}}{x^{k}}=\frac{1}{x^{n}\nu(x)}\left\lbrace
\begin{array}{c}
2M_{j,n}^{(3)}+\left(M_{j,n+2}^{(3)}-M_{j,n+1}^{(3)}\right)x+M_{j,n+1}^{(3)}x^{2}\\
-x^{n+1}\left(M_{j,2}^{(3)}-M_{j,1}^{(3)}-M_{j,0}^{(3)}-\left(M_{j,0}^{(3)}-M_{j,1}^{(3)}\right)x+M_{j,0}^{(3)}x^{2}\right)
\end{array}%
\right\rbrace,
\end{equation}
where $\nu(x)=x^{3}-x^{2}-x-2$.
\end{theorem}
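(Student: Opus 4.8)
The plan is to prove (\ref{p5}) by the standard generating-function telescoping and then to observe that (\ref{p6}) follows by the identical computation, since $\{M_{j,n}^{(3)}\}$ obeys the same recurrence (\ref{d2}) with the same characteristic polynomial $\nu(x)=x^{3}-x^{2}-x-2$. Besides $x\neq 0$ one also needs $\nu(x)\neq 0$ (equivalently $x\notin\{2,\omega_{1},\omega_{2}\}$) for the right-hand side to be defined, which I assume. Put $S_{n}=\sum_{k=0}^{n}M_{J,k}^{(3)}x^{-k}$.

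First I would multiply $S_{n}$ by the shifted characteristic polynomial $x^{-3}\nu(x)=1-x^{-1}-x^{-2}-2x^{-3}$. Re-indexing the four resulting sums and collecting like powers of $x$ gives
\[
x^{-3}\nu(x)\,S_{n}=\sum_{k}\bigl(M_{J,k}^{(3)}-M_{J,k-1}^{(3)}-M_{J,k-2}^{(3)}-2M_{J,k-3}^{(3)}\bigr)x^{-k},
\]
where $k$ ranges from $0$ to $n+3$ and any matrix with out-of-range index is omitted from the coefficient. By the recurrence (\ref{d1}), every coefficient with $3\le k\le n$ is the zero matrix, so for $n\ge 2$ only six boundary terms survive: those at $k\in\{0,1,2\}$, which assemble into $M_{J,0}^{(3)}+(M_{J,1}^{(3)}-M_{J,0}^{(3)})x^{-1}+(M_{J,2}^{(3)}-M_{J,1}^{(3)}-M_{J,0}^{(3)})x^{-2}$, and those at $k\in\{n+1,n+2,n+3\}$, which --- after applying (\ref{d1}) once to collapse $M_{J,n}^{(3)}+M_{J,n-1}^{(3)}+2M_{J,n-2}^{(3)}$ into $M_{J,n+1}^{(3)}$ --- assemble into $-M_{J,n+1}^{(3)}x^{-(n+1)}-(M_{J,n}^{(3)}+2M_{J,n-1}^{(3)})x^{-(n+2)}-2M_{J,n}^{(3)}x^{-(n+3)}$.

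Then I would clear denominators: multiplying the displayed identity through by $x^{3}$, dividing by $\nu(x)$, and bringing the $n$-independent part over the common denominator $x^{n}\nu(x)$ (which produces the factor $x^{n+1}$ seen in (\ref{p5})) rearranges everything into the shape of the right-hand side of (\ref{p5}); the final touch is to rewrite the coefficient $M_{J,n}^{(3)}+2M_{J,n-1}^{(3)}$ as $M_{J,n+2}^{(3)}-M_{J,n+1}^{(3)}$ --- once more by (\ref{d1}) --- exactly as that difference appears in (\ref{p5}). The two small cases $n=0$ and $n=1$, where the index windows $\{0,1,2\}$ and $\{n+1,n+2,n+3\}$ overlap and the counting above needs a minor adjustment, are checked directly from the initial matrices in the Definition. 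A fully equivalent alternative, in the style of Theorem \ref{t2}, is induction on $n$: from $S_{n+1}=S_{n}+M_{J,n+1}^{(3)}x^{-(n+1)}$ and the inductive hypothesis one is reduced to the coefficientwise polynomial identity $x\,B_{n}+M_{J,n+1}^{(3)}\nu(x)=B_{n+1}$, where $B_{n}$ denotes the $n$-dependent bracket, and this identity is precisely (\ref{d1}).

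There is no conceptual difficulty here; the only point that needs care is the bookkeeping of the six boundary terms --- keeping the exponent shifts $-(n+1)$, $-(n+2)$, $-(n+3)$ straight and not forgetting that (\ref{d1}) must be invoked a second time, to turn the $x^{-(n+2)}$ coefficient into the difference $M_{J,n+2}^{(3)}-M_{J,n+1}^{(3)}$ appearing on the right-hand side of (\ref{p5}).
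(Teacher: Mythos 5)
Your route is genuinely different from the paper's. The paper first invokes the Binet-type formula of Theorem \ref{t1}, writes $\sum_{k=0}^{n}M_{J,k}^{(3)}x^{-k}$ as three geometric series with ratios $2/x$, $\omega_{1}/x$, $\omega_{2}/x$, sums each, and recombines over the common denominator $x^{n}\nu(x)=x^{n}(x-2)(x-\omega_{1})(x-\omega_{2})$. You instead multiply the partial sum by $x^{-3}\nu(x)$ and let the recurrence (\ref{d1}) annihilate the interior coefficients, which avoids the complex roots entirely and is arguably cleaner; your bookkeeping of the six surviving boundary terms (the head at $k\in\{0,1,2\}$ and the tail at $k\in\{n+1,n+2,n+3\}$, including the collapse of the $k=n+1$ coefficient to $-M_{J,n+1}^{(3)}$) is correct.

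However, the final step --- ``rearranges everything into the shape of the right-hand side of (\ref{p5})'' --- does not go through: your own intermediate identity yields the \emph{negative} of the displayed right-hand side. Multiplying your identity by $x^{3}$, dividing by $\nu(x)$, and placing everything over $x^{n}\nu(x)$ gives the numerator $-2M_{J,n}^{(3)}-\left(M_{J,n}^{(3)}+2M_{J,n-1}^{(3)}\right)x-M_{J,n+1}^{(3)}x^{2}+x^{n+1}\left(\cdots\right)$, i.e.\ every sign opposite to what (\ref{p5}) prints (note $M_{J,n}^{(3)}+2M_{J,n-1}^{(3)}=M_{J,n+2}^{(3)}-M_{J,n+1}^{(3)}$). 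The fault lies in the statement, not in your computation: at $n=0$ the right-hand side of (\ref{p5}) collapses to $\frac{2+x+x^{2}-x^{3}}{\nu(x)}M_{J,0}^{(3)}=-M_{J,0}^{(3)}$ while the left-hand side is $M_{J,0}^{(3)}$, and at $x=1$, $n=2$ the scalar version returns $-2$ against the true sum $2$. So the theorem as printed carries an overall sign error (equivalently, $\nu(x)$ should be $2+x+x^{2}-x^{3}$); the paper's own proof buries this in its concluding ``if we rearrange the last equality'' sentence, and your proposal inherits the same defect by asserting, without carrying out the final rearrangement, that your (correct) boundary terms match a right-hand side they in fact contradict. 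Had you executed the last step explicitly, your method would have exposed the error rather than reproduced it.
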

\begin{proof}
In contrast, here we will just prove (\ref{p6}) since the proof of (\ref{p5}) can be done in a similar way. From Theorem \ref{t1}, we have
\begin{align*}
\sum_{k=0}^{n}\frac{M_{j,k}^{(3)}}{x^{k}}&=\left(\frac{M_{j,2}^{(3)}+M_{j,1}^{(3)}+M_{j,0}^{(3)}}{(2-\omega_{1})(2-\omega_{2})}\right)\sum_{k=0}^{n}\left(\frac{2}{x}\right)^{k}\\
&\ \ - \left(\frac{M_{j,2}^{(3)}-(2+\omega_{2})M_{j,1}^{(3)}+2\omega_{2}M_{j,0}^{(3)}}{(2-\omega_{1})(\omega_{1}-\omega_{2})}\right)\sum_{k=0}^{n}\left(\frac{\omega_{1}}{x}\right)^{k}\\
&\  \ + \left(\frac{M_{j,2}^{(3)}-(2+\omega_{1})M_{j,1}^{(3)}+2\omega_{1}M_{j,0}^{(3)}}{(2-\omega_{2})(\omega_{1}-\omega_{2})}\right)\sum_{k=0}^{n}\left(\frac{\omega_{2}}{x}\right)^{k}.
\end{align*}
By considering the definition of a geometric sequence, we get
\begin{align*}
\sum_{k=0}^{n}\frac{M_{j,k}^{(3)}}{x^{k}}&=\left(\frac{M_{j,2}^{(3)}+M_{j,1}^{(3)}+M_{j,0}^{(3)}}{(2-\omega_{1})(2-\omega_{2})}\right)\frac{2^{n+1}-x^{n+1}}{x^{n}(2-x)}\\
&\ \ - \left(\frac{M_{j,2}^{(3)}-(2+\omega_{2})M_{j,1}^{(3)}+2\omega_{2}M_{j,0}^{(3)}}{(2-\omega_{1})(\omega_{1}-\omega_{2})}\right)\frac{\omega_{1}^{n+1}-x^{n+1}}{x^{n}(\omega_{1}-x)}\\
&\  \ + \left(\frac{M_{j,2}^{(3)}-(2+\omega_{1})M_{j,1}^{(3)}+2\omega_{1}M_{j,0}^{(3)}}{(2-\omega_{2})(\omega_{1}-\omega_{2})}\right)\frac{\omega_{2}^{n+1}-x^{n+1}}{x^{n}(\omega_{2}-x)}\\
&=\frac{1}{x^{n}\nu(x)}\left\lbrace
\begin{array}{c}
\left(\frac{M_{j,2}^{(3)}+M_{j,1}^{(3)}+M_{j,0}^{(3)}}{(2-\omega_{1})(2-\omega_{2})}\right)(2^{n+1}-x^{n+1})(\omega_{1}-x)(\omega_{2}-x)\\
- \left(\frac{M_{j,2}^{(3)}-(2+\omega_{2})M_{j,1}^{(3)}+2\omega_{2}M_{j,0}^{(3)}}{(2-\omega_{1})(\omega_{1}-\omega_{2})}\right)(\omega_{1}^{n+1}-x^{n+1})(2-x)(\omega_{2}-x)\\
+ \left(\frac{M_{j,2}^{(3)}-(2+\omega_{1})M_{j,1}^{(3)}+2\omega_{1}M_{j,0}^{(3)}}{(2-\omega_{2})(\omega_{1}-\omega_{2})}\right)(\omega_{2}^{n+1}-x^{n+1})(2-x)(\omega_{1}-x)
\end{array}%
\right\rbrace,
\end{align*}
where $\nu(x)=x^{3}-x^{2}-x-2$. If we rearrange the last equality, then we obtain
$$
\sum_{k=0}^{n}\frac{M_{j,k}^{(3)}}{x^{k}}=\frac{1}{x^{n}\nu(x)}\left\lbrace
\begin{array}{c}
2M_{j,n}^{(3)}+\left(M_{j,n+2}^{(3)}-M_{j,n+1}^{(3)}\right)x+M_{j,n+1}^{(3)}x^{2}\\
-x^{n+1}\left(M_{j,2}^{(3)}-M_{j,1}^{(3)}-M_{j,0}^{(3)}-\left(M_{j,0}^{(3)}-M_{j,1}^{(3)}\right)x+M_{j,0}^{(3)}x^{2}\right)
\end{array}%
\right\rbrace.
$$ So, the proof is completed.
\end{proof}

In the following theorem, we give the sum of third-order Jacobsthal and third-order Jacobsthal-Lucas matrix sequences corresponding to different indices.
\begin{theorem}\label{t4}
For $r\geq m$, we have
\begin{equation}\label{p7}
\sum_{k=0}^{n}M_{J,mk+r}^{(3)}=\frac{1}{\sigma_{n}}\left\lbrace
\begin{array}{c}
M_{J,m(n+1)+r}^{(3)}-M_{J,r}^{(3)}+2^{m}M_{J,mn+r}^{(3)}-2^{m}M_{J,r-m}^{(3)}\\
-M_{J,m(n+1)+r}^{(3)}\mu(m)+M_{J,r}^{(3)}\mu(m)+M_{J,m(n+2)+r}^{(3)}-M_{J,r+m}^{(3)}
\end{array}%
\right\rbrace
\end{equation}
\begin{equation}\label{p8}
\sum_{k=0}^{n}M_{j,mk+r}^{(3)}=\frac{1}{\sigma_{n}}\left\lbrace
\begin{array}{c}
M_{j,m(n+1)+r}^{(3)}-M_{j,r}^{(3)}+2^{m}M_{j,mn+r}^{(3)}-2^{m}M_{j,r-m}^{(3)}\\
-M_{j,m(n+1)+r}^{(3)}\mu(m)+M_{j,r}^{(3)}\mu(m)+M_{j,m(n+2)+r}^{(3)}-M_{j,r+m}^{(3)}
\end{array}%
\right\rbrace,
\end{equation}
where $\sigma_{n}=2^{m+1}+(1-2^{m})(\omega_{1}^{m}+\omega_{2}^{m})-2$ and $\mu(m)=2^{m}+\omega_{1}^{m}+\omega_{2}^{m}$.
\end{theorem}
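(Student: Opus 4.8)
The plan is to follow the blueprint of the proof of Theorem~\ref{t3}, replacing the single geometric progression used there by the three progressions with ratios $2^{m}$, $\omega_{1}^{m}$, $\omega_{2}^{m}$. Starting from the Binet-type representation $M_{J,n}^{(3)}=c_{1}2^{n}+c_{2}\omega_{1}^{n}+c_{3}\omega_{2}^{n}$ of Theorem~\ref{t1} (with the matrix constants of (\ref{e13})), substitution of $n=mk+r$ gives
\begin{equation*}
M_{J,mk+r}^{(3)}=c_{1}2^{r}(2^{m})^{k}+c_{2}\omega_{1}^{r}(\omega_{1}^{m})^{k}+c_{3}\omega_{2}^{r}(\omega_{2}^{m})^{k}.
\end{equation*}
Since $\omega_{1},\omega_{2}$ are primitive cube roots of unity, $\omega_{1}^{m}=\omega_{2}^{m}=1$ exactly when $3\mid m$, which is precisely the degenerate case $\sigma_{n}=0$; so, on the hypothesis $\sigma_{n}\neq0$ that the statement implicitly requires, all three ratios differ from $1$ and the finite geometric sum formula $\sum_{k=0}^{n}q^{k}=(q^{n+1}-1)/(q-1)$ applies term by term.

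First I would carry out the three geometric summations and put the result over the common denominator $(2^{m}-1)(\omega_{1}^{m}-1)(\omega_{2}^{m}-1)$. Using $\omega_{1}\omega_{2}=1$ one has $(\omega_{1}^{m}-1)(\omega_{2}^{m}-1)=2-(\omega_{1}^{m}+\omega_{2}^{m})$, hence
\begin{equation*}
(2^{m}-1)(\omega_{1}^{m}-1)(\omega_{2}^{m}-1)=2^{m+1}+(1-2^{m})(\omega_{1}^{m}+\omega_{2}^{m})-2=\sigma_{n},
\end{equation*}
which supplies the factor $1/\sigma_{n}$ in (\ref{p7}). What remains is to show that the numerator so obtained, namely
\begin{equation*}
c_{1}2^{r}\bigl((2^{m})^{n+1}-1\bigr)(\omega_{1}^{m}-1)(\omega_{2}^{m}-1)+c_{2}\omega_{1}^{r}\bigl((\omega_{1}^{m})^{n+1}-1\bigr)(2^{m}-1)(\omega_{2}^{m}-1)+c_{3}\omega_{2}^{r}\bigl((\omega_{2}^{m})^{n+1}-1\bigr)(2^{m}-1)(\omega_{1}^{m}-1),
\end{equation*}
coincides with the expression in braces in (\ref{p7}).

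To do that I would expand each of the eight shifted matrices appearing in the brace of (\ref{p7}) through the Binet formula, so that both the brace and the displayed numerator above become explicit scalar combinations of $c_{1},c_{2},c_{3}$; it then suffices to match the coefficients of $c_{1}$, $c_{2}$, $c_{3}$ separately. For the $c_{1}$-coefficient the contributions of $M_{J,m(n+2)+r}^{(3)}$ and $M_{J,m(n+1)+r}^{(3)}\mu(m)$ (recall $\mu(m)=2^{m}+\omega_{1}^{m}+\omega_{2}^{m}$) cancel their highest-degree parts, leaving $c_{1}2^{r}\bigl((2^{m})^{n+1}-1\bigr)\bigl(2-\omega_{1}^{m}-\omega_{2}^{m}\bigr)$, which is the first summand above by $(\omega_{1}^{m}-1)(\omega_{2}^{m}-1)=2-\omega_{1}^{m}-\omega_{2}^{m}$. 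For the $c_{2}$-coefficient one uses $\omega_{2}^{m}=(\omega_{1}^{m})^{-1}$ (again from $\omega_{1}\omega_{2}=1$); a short computation then collapses the $c_{2}$-part of the whole brace to $c_{2}\omega_{1}^{r}(2^{m}-1)(1-\omega_{1}^{m})\bigl((\omega_{1}^{m})^{n}-(\omega_{1}^{m})^{-1}\bigr)$, which is exactly $c_{2}\omega_{1}^{r}\bigl((\omega_{1}^{m})^{n+1}-1\bigr)(2^{m}-1)(\omega_{2}^{m}-1)$, the second summand above; the $c_{3}$-case is identical with $\omega_{1}$ and $\omega_{2}$ interchanged.

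The only obstacle is the bookkeeping in this last step: there are eight shifted matrices, three scalar components, and one has to keep track of the cancellations that involve $\mu(m)$ together with the relation $\omega_{1}\omega_{2}=1$ (equivalently $2+\omega_{1}+\omega_{2}=1$, the Vieta relation for $x^{3}-x^{2}-x-2$). There is no conceptual difficulty beyond that. Finally, (\ref{p8}) follows by the identical argument, because Theorem~\ref{t1} furnishes the very same Binet form (\ref{p2}) for $M_{j,n}^{(3)}$ and none of the manipulations above uses the particular values of the matrix constants.
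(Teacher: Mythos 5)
Your proposal follows essentially the same route as the paper: both start from the Binet-type form of Theorem~\ref{t1}, sum the three geometric progressions with ratios $2^{m}$, $\omega_{1}^{m}$, $\omega_{2}^{m}$, identify the common denominator $(2^{m}-1)(\omega_{1}^{m}-1)(\omega_{2}^{m}-1)$ with $\sigma_{n}$ via $\omega_{1}\omega_{2}=1$, and then rearrange the numerator into the stated combination of shifted matrix terms (the paper compresses this last step into ``after some algebra''). Your additional remark that the formula degenerates when $3\mid m$ (since then $\omega_{1}^{m}=\omega_{2}^{m}=1$ and $\sigma_{n}=0$) is a correct caveat that the paper omits, but it does not change the method.
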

\begin{proof}
(\ref{p7}): Let us take $A=\frac{M_{J,2}^{(3)}+M_{J,1}^{(3)}+M_{J,0}^{(3)}}{(2-\omega_{1})(2-\omega_{2})}$, $B=\frac{M_{J,2}^{(3)}-(2+\omega_{2})M_{J,1}^{(3)}+2\omega_{2}M_{J,0}^{(3)}}{(2-\omega_{1})(\omega_{1}-\omega_{2})}$ and $C=\frac{M_{J,2}^{(3)}-(2+\omega_{1})M_{J,1}^{(3)}+2\omega_{1}M_{J,0}^{(3)}}{(2-\omega_{2})(\omega_{1}-\omega_{2})}$. Then, we write
\begin{align*}
\sum_{k=0}^{n}M_{j,mk+r}^{(3)}&=\sum_{k=0}^{n}(A2^{mk+r}-B\omega_{1}^{mk+r}+C\omega_{2}^{mk+r})\\
&=A2^{r}\sum_{k=0}^{n}2^{mk}-B\omega_{1}^{r}\sum_{k=0}^{n}\omega_{1}^{mk}+C\omega_{2}^{r}\sum_{k=0}^{n}\omega_{2}^{mk}\\
&=A2^{r}\left(\frac{2^{m(n+1)}-1}{2^{m}-1}\right)-B\omega_{1}^{r}\left(\frac{\omega_{1}^{m(n+1)}-1}{\omega_{1}^{m}-1}\right)+C\omega_{2}^{r}\left(\frac{\omega_{2}^{m(n+1)}-1}{\omega_{2}^{m}-1}\right)\\
&=\frac{1}{\sigma_{n}}\left\lbrace
\begin{array}{c}
\left(A2^{m(n+1)+r}-A2^{r}\right)\left(\omega_{1}^{m}\omega_{2}^{m}-(\omega_{1}^{m}+\omega_{2}^{m})+1\right)\\
-\left(B\omega_{1}^{m(n+1)+r}-B\omega_{1}^{r}\right)\left(2^{m}\omega_{2}^{m}-(2^{m}+\omega_{2}^{m})+1\right)\\
+\left(C\omega_{2}^{m(n+1)+r}-C\omega_{2}^{r}\right)\left(2^{m}\omega_{1}^{m}-(2^{m}+\omega_{1}^{m})+1\right)
\end{array}%
\right\rbrace,
\end{align*}
where $\sigma_{n}=2^{m+1}+(1-2^{m})(\omega_{1}^{m}+\omega_{2}^{m})-2$. After some algebra, we obtain
$$\sum_{k=0}^{n}M_{j,mk+r}^{(3)}=\frac{1}{\sigma_{n}}\left\lbrace
\begin{array}{c}
M_{J,m(n+1)+r}^{(3)}-M_{J,r}^{(3)}+2^{m}M_{J,mn+r}^{(3)}-2^{m}M_{J,r-m}^{(3)}\\
-M_{J,m(n+1)+r}^{(3)}\mu(m)+M_{J,r}^{(3)}\mu(m)+M_{J,m(n+2)+r}^{(3)}-M_{J,r+m}^{(3)}
\end{array}%
\right\rbrace,$$
where $\mu(m)=2^{m}+\omega_{1}^{m}+\omega_{2}^{m}$.

(\ref{p8}): The proof is similar to the proof of (\ref{p7}).
\end{proof}

\section{The relationships between matrix sequences $M_{J,n}^{(3)}$ and $M_{j,n}^{(3)}$}

\begin{lemma}\label{lem1}
For $m,n\in \mathbb{N}$, the third-order Jacobsthal and third-order Jacobsthal-Lucas matrix sequences are conmutative. The following results hold.
\begin{equation}\label{p10}
M_{J,n}^{(3)}M_{J,m}^{(3)}=M_{J,m}^{(3)}M_{J,n}^{(3)}=M_{J,n+m}^{(3)},
\end{equation}
\begin{equation}\label{p11}
M_{j,n}^{(3)}M_{j,m}^{(3)}=M_{j,m}^{(3)}M_{j,n}^{(3)},
\end{equation}
\begin{equation}\label{p12}
M_{j,1}^{(3)}M_{J,n}^{(3)}=M_{J,n}^{(3)}M_{j,1}^{(3)}=M_{j,n+1}^{(3)},
\end{equation}
\begin{equation}\label{p13}
M_{j,n}^{(3)}M_{J,1}^{(3)}=M_{J,1}^{(3)}M_{j,n}^{(3)}=M_{j,n+1}^{(3)},
\end{equation}
\begin{equation}\label{p14}
M_{J,n}^{(3)}M_{j,n+1}^{(3)}=M_{j,2n+1}^{(3)}.
\end{equation}
\end{lemma}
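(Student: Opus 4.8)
The plan is to prove the identities in Lemma~\ref{lem1} by reducing them to the closed form already established in Theorem~\ref{t1}, together with the commutativity of the scalar powers $2^{n},\omega_{1}^{n},\omega_{2}^{n}$ and the explicit matrix entries from Theorem~\ref{t2}. The underlying mechanism is uniform: each matrix sequence $M_{J,n}^{(3)}$ and $M_{j,n}^{(3)}$ has an expansion of the form $P\,2^{n}+Q\,\omega_{1}^{n}+R\,\omega_{2}^{n}$ with fixed coefficient matrices $P,Q,R$ (these are the bracketed coefficients in \eqref{p1} and \eqref{p2}), so products of two such sequences are controlled by the products of those fixed coefficient matrices. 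The first thing I would verify is the single structural fact that all relevant coefficient matrices mutually commute; from that, everything else is bookkeeping.

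First I would prove \eqref{p10}. Write $M_{J,n}^{(3)}=P2^{n}+Q\omega_{1}^{n}+R\omega_{2}^{n}$ from \eqref{p1}. The cleanest route is to observe that $M_{J,1}^{(3)}$ is the companion-type matrix of the recurrence \eqref{ec:5}, that $M_{J,0}^{(3)}=I$, and that Theorem~\ref{t2} together with \eqref{d1} forces $M_{J,n}^{(3)}=\left(M_{J,1}^{(3)}\right)^{n}$ for all $n\geq 0$: indeed both sides satisfy the same order-3 recurrence with the same three initial matrices (one checks $\left(M_{J,1}^{(3)}\right)^{2}=M_{J,2}^{(3)}$ and $\left(M_{J,1}^{(3)}\right)^{3}=M_{J,2}^{(3)}+M_{J,1}^{(3)}+2I$ by direct multiplication of the given $3\times 3$ matrices, and then induction via \eqref{d1}). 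Once $M_{J,n}^{(3)}=\left(M_{J,1}^{(3)}\right)^{n}$ is in hand, \eqref{p10} is immediate since powers of a single matrix commute and $\left(M_{J,1}^{(3)}\right)^{n}\left(M_{J,1}^{(3)}\right)^{m}=\left(M_{J,1}^{(3)}\right)^{n+m}$.

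Next, for \eqref{p12} and \eqref{p13} I would check the base relation $M_{j,1}^{(3)}=M_{j,0}^{(3)}M_{J,1}^{(3)}=M_{J,1}^{(3)}M_{j,0}^{(3)}$ by direct $3\times 3$ multiplication, and more generally $M_{j,n}^{(3)}=M_{j,0}^{(3)}\left(M_{J,1}^{(3)}\right)^{n}$ by the same twin-recurrence induction argument (both sides satisfy \eqref{d2} with matching initial terms, using that $M_{j,1}^{(3)}$ and $M_{j,2}^{(3)}$ are $M_{j,0}^{(3)}$ times the first and second powers of $M_{J,1}^{(3)}$, which I would verify against the explicit matrices). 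Then $M_{j,1}^{(3)}M_{J,n}^{(3)}=M_{j,0}^{(3)}M_{J,1}^{(3)}\left(M_{J,1}^{(3)}\right)^{n}=M_{j,0}^{(3)}\left(M_{J,1}^{(3)}\right)^{n+1}=M_{j,n+1}^{(3)}$, and commutativity follows because $M_{J,n}^{(3)}$ is a power of $M_{J,1}^{(3)}$ while $M_{j,0}^{(3)}$ commutes with $M_{J,1}^{(3)}$; \eqref{p13} is the same computation with the factor $M_{J,1}^{(3)}$ on the other side, and \eqref{p11} follows since $M_{j,n}^{(3)}M_{j,m}^{(3)}=M_{j,0}^{(3)}\left(M_{J,1}^{(3)}\right)^{n}M_{j,0}^{(3)}\left(M_{J,1}^{(3)}\right)^{m}$, which is symmetric in $n,m$ once $M_{j,0}^{(3)}$ is known to commute with all powers of $M_{J,1}^{(3)}$. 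Finally \eqref{p14} is the specialization $M_{J,n}^{(3)}M_{j,n+1}^{(3)}=\left(M_{J,1}^{(3)}\right)^{n}M_{j,0}^{(3)}\left(M_{J,1}^{(3)}\right)^{n+1}=M_{j,0}^{(3)}\left(M_{J,1}^{(3)}\right)^{2n+1}=M_{j,2n+1}^{(3)}$.

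The main obstacle is the single nonobvious algebraic input on which the whole argument rests: that $M_{j,0}^{(3)}$ commutes with $M_{J,1}^{(3)}$ (equivalently, that $M_{j,0}^{(3)}$ is a polynomial in $M_{J,1}^{(3)}$, or at least lies in its commutant). This is not visually apparent from the given matrices and must be checked by an explicit $3\times 3$ multiplication in both orders; all the commutativity claims in the lemma degenerate to this one fact plus the triviality that powers of a fixed matrix commute. A secondary, purely mechanical obstacle is confirming the identifications $\left(M_{J,1}^{(3)}\right)^{2}=M_{J,2}^{(3)}$, $M_{j,0}^{(3)}M_{J,1}^{(3)}=M_{j,1}^{(3)}$, and $M_{j,0}^{(3)}\left(M_{J,1}^{(3)}\right)^{2}=M_{j,2}^{(3)}$, which anchor the inductions; these are routine but must be done to license the use of \eqref{d1} and \eqref{d2}. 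If one prefers to avoid the ``power of a single matrix'' shortcut, an alternative is to expand both factors via Theorem~\ref{t1}, multiply out the nine cross terms using $2^{n}\cdot 2^{m}=2^{n+m}$, $\omega_{i}^{n}\omega_{j}^{m}=\omega_{i}^{n}\omega_{j}^{m}$, and the idempotent/orthogonality relations among the coefficient matrices $P,Q,R$ (which follow from $M_{J,0}^{(3)}=I$ forcing $P+Q+R=I$ together with $P^{2}=P$, $Q^{2}=Q$, $R^{2}=R$, $PQ=QP=0$, etc.), but establishing those orthogonality relations is itself equivalent to the same direct matrix computation, so I would favor the companion-matrix route as the shortest path.
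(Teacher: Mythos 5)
Your argument is correct, and for the bulk of the lemma it is genuinely better organized than the paper's own proof. For \eqref{p10} the two routes essentially coincide: the paper also runs an induction on $m$ through the recurrence \eqref{d1} (showing $M_{J,n+(m+1)}^{(3)}=M_{J,n}^{(3)}M_{J,m+1}^{(3)}$), which is the same content as your $M_{J,n}^{(3)}=\bigl(M_{J,1}^{(3)}\bigr)^{n}$; the paper then dismisses commutativity as ``easy to see,'' whereas you get it for free from powers of a single matrix. Where you diverge is \eqref{p11}--\eqref{p14}: the paper proves \eqref{p12} by a separate induction that multiplies out the explicit entry-wise matrices of Theorem \ref{t2} and asserts the remaining identities are ``similar,'' while you factor the whole Jacobsthal--Lucas matrix sequence as $M_{j,n}^{(3)}=M_{j,0}^{(3)}\bigl(M_{J,1}^{(3)}\bigr)^{n}$ and reduce all five identities to the single computation that $M_{j,0}^{(3)}$ commutes with the companion matrix $M_{J,1}^{(3)}$ (which does check out: both products equal $M_{j,1}^{(3)}$, and $M_{j,0}^{(3)}\bigl(M_{J,1}^{(3)}\bigr)^{2}=M_{j,2}^{(3)}$). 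That buys a uniform derivation of \eqref{p11}, \eqref{p13} and \eqref{p14}, which the paper never actually writes out, so your version is arguably more complete.

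One practical warning about your anchoring step: you propose to verify $\bigl(M_{J,1}^{(3)}\bigr)^{2}=M_{J,2}^{(3)}$ ``by direct multiplication of the given $3\times 3$ matrices.'' If you do this against the matrices as printed in the paper's Definition, the check fails in the $(1,1)$ entry: the product has $2=J_{3}^{(3)}$ there, while the printed $M_{J,2}^{(3)}$ has a $1$. This is a typo in the paper (the printed initial value is inconsistent with Theorem \ref{t2} and with the recurrence, both of which force $J_{3}^{(3)}=2$ in that position), so you should resolve the discrepancy in favor of the entry $2$; with that correction every base-case identity you list holds and your induction goes through.
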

\begin{proof}
Here, we will just prove (\ref{p10}) and (\ref{p12}) since (\ref{p11}), (\ref{p13}) and (\ref{p14}) can be dealt with in the same manner. To prove Eq. (\ref{p10}), let us use the induction on $m$. If $m=0$, the proof is obvious since that $M_{J,0}^{(3)}$ is the identity matrix of order 3. Let us assume that Eq: (\ref{p10}) holds for all values $k$ less than or equal $m$. Now we have to show that the result is true for $m+1$:
\begin{align*}
M_{J,n+(m+1)}^{(3)}&=M_{J,n+m}^{(3)}+M_{J,n+m-1}^{(3)}+2M_{J,n+m-2}^{(3)}\\
&=M_{J,n}^{(3)}M_{J,m}^{(3)}+M_{J,n}^{(3)}M_{J,m-1}^{(3)}+2M_{J,n}^{(3)}M_{J,m-2}^{(3)}\\
&=M_{J,n}^{(3)}\left(M_{J,m}^{(3)}+M_{J,m-1}^{(3)}+2M_{J,m-2}^{(3)}\right)\\
&=M_{J,n}^{(3)}M_{J,m+1}^{(3)}.
\end{align*}
It is easy to see that $M_{J,n}^{(3)}M_{J,m}^{(3)}=M_{J,m}^{(3)}M_{J,n}^{(3)}$. Hence we obtain the result.

(\ref{p12}): To prove equation (\ref{p12}), we again use induction on $n$. Let $n=0$, we get $M_{j,1}^{(3)}M_{J,0}^{(3)}=M_{j,1}^{(3)}$. Let us assume that $M_{j,1}^{(3)}M_{J,n}^{(3)}=M_{j,n+1}^{(3)}$ is true for all values $k$ less than or equal $n$. Then,
\begin{align*}
M_{j,n+1}^{(3)}&=\left[
\begin{array}{ccc}
j_{n+2}^{(3)}& j_{n+1}^{(3)}+2j_{n}^{(3)}& 2j_{n+1}^{(3)} \\ 
j_{n+1}^{(3)}&j_{n}^{(3)}+2j_{n-1}^{(3)}& 2j_{n}^{(3)}\\ 
j_{n}^{(3)} & j_{n-1}^{(3)}+2j_{n-2}^{(3)}&2j_{n-1}^{(3)}%
\end{array}%
\right]\\
&=\left[
\begin{array}{ccc}
j_{n+1}^{(3)}& j_{n}^{(3)}+2j_{n-1}^{(3)}& 2j_{n}^{(3)} \\ 
j_{n}^{(3)}&j_{n-1}^{(3)}+2j_{n-2}^{(3)}& 2j_{n-1}^{(3)}\\ 
j_{n-1}^{(3)} & j_{n-2}^{(3)}+2j_{n-3}^{(3)}&2j_{n-2}^{(3)}%
\end{array}%
\right]\left[
\begin{array}{ccc}
1& 1& 2 \\ 
1&0& 0\\ 
0& 1&0%
\end{array}%
\right]\\
&=M_{j,n}^{(3)}M_{J,1}^{(3)}\\
&=M_{j,1}^{(3)}M_{J,n-1}^{(3)}M_{J,1}^{(3)}\\
&=M_{j,1}^{(3)}M_{J,n}^{(3)}.
\end{align*}
Hence the result. 
\end{proof}

\begin{theorem}\label{t5}
For $m,n\in \mathbb{N}$ the following properties hold.
\begin{equation}\label{p15}
M_{j,n}^{(3)}=M_{J,n}^{(3)}+4M_{J,n-1}^{(3)}+4M_{J,n-2}^{(3)},
\end{equation}
\begin{equation}\label{p16}
M_{j,n}^{(3)}=2M_{J,n+1}^{(3)}-M_{J,n}^{(3)}+2M_{J,n-1}^{(3)},
\end{equation}
\begin{equation}\label{p17}
M_{j,1}^{(3)}M_{J,n}^{(3)}=M_{J,n+2}^{(3)}+3M_{J,n}^{(3)}+2M_{J,n-1}^{(3)},
\end{equation}
\end{theorem}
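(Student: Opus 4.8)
The plan is to derive all three identities from Theorem \ref{t2} (the explicit entrywise form of the matrix sequences), the defining recurrence (\ref{d1}), and the commutativity relations of Lemma \ref{lem1}, so that each matrix identity collapses to a short scalar check. The only slightly delicate bookkeeping is handling indices below $3$, where Theorem \ref{t2} does not directly apply.

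For (\ref{p15}) and (\ref{p16}) I would first note that, by Theorem \ref{t2}, for $n\geq 3$ each entry of $M_{J,n}^{(3)}$ is (up to an index shift) either $J_m^{(3)}$, the combination $J_m^{(3)}+2J_{m-1}^{(3)}$, or $2J_m^{(3)}$, and similarly for $M_{j,n}^{(3)}$. Hence both (\ref{p15}) and (\ref{p16}) are equivalent, entry by entry, to the single scalar identity
\[
j_m^{(3)}=J_m^{(3)}+4J_{m-1}^{(3)}+4J_{m-2}^{(3)}=2J_{m+1}^{(3)}-J_m^{(3)}+2J_{m-1}^{(3)},
\]
where the second equality is immediate from (\ref{ec:5}) (solve the recurrence for $2J_{m-2}^{(3)}$ and substitute), so in particular (\ref{p15}) and (\ref{p16}) are equivalent. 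It then suffices to prove the first equality, which I would do by induction: the sequence $J_m^{(3)}+4J_{m-1}^{(3)}+4J_{m-2}^{(3)}$ is a fixed linear combination of shifts of $\{J_m^{(3)}\}$ and hence satisfies (\ref{ec:5}); using $J_{-1}^{(3)}=0$ and $J_{-2}^{(3)}=\tfrac12$ one checks it agrees with $j_m^{(3)}$ for $m=0,1,2$, whence the two sequences coincide for all $m$. (Alternatively it falls straight out of the Binet formulas (\ref{b1})--(\ref{b2}): using (\ref{ec:5}) twice, $J_m^{(3)}+4J_{m-1}^{(3)}+4J_{m-2}^{(3)}-4J_m^{(3)}=J_m^{(3)}-8J_{m-3}^{(3)}=V_m^{(3)}=j_m^{(3)}-4J_m^{(3)}$ by periodicity of $V_m^{(3)}$ and (\ref{e6}).) The residual cases $n\in\{0,1,2\}$ I would verify directly from the matrix initial conditions, or by extending (\ref{p3})--(\ref{p4}) to negative indices exactly as in the proof of Theorem \ref{t2}.

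For (\ref{p17}) I would avoid entrywise work and use Lemma \ref{lem1} instead. By (\ref{p12}), $M_{j,1}^{(3)}M_{J,n}^{(3)}=M_{j,n+1}^{(3)}$; applying (\ref{p16}) with $n$ replaced by $n+1$ gives $M_{j,n+1}^{(3)}=2M_{J,n+2}^{(3)}-M_{J,n+1}^{(3)}+2M_{J,n}^{(3)}$. Comparing with the claimed right-hand side $M_{J,n+2}^{(3)}+3M_{J,n}^{(3)}+2M_{J,n-1}^{(3)}$, the difference cancels to $M_{J,n+2}^{(3)}=M_{J,n+1}^{(3)}+M_{J,n}^{(3)}+2M_{J,n-1}^{(3)}$, which is just (\ref{d1}) shifted by one. (Equivalently, substitute (\ref{p15}) at index $n+1$ and invoke (\ref{d1}).)

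I do not expect a genuine obstacle here: every step is routine linear algebra once the scalar identity is in hand. The only points needing care are anchoring the induction correctly with the negative-index values $J_{-1}^{(3)},J_{-2}^{(3)}$, and treating the low-index cases $n<3$ separately, since Theorem \ref{t2} is stated only for $n\geq 3$.
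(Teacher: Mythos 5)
Your proposal is correct, and it splits into two halves relative to the paper. For (\ref{p15}) your route is essentially the paper's: both reduce the matrix identity, via the entrywise description in Theorem \ref{t2}, to the scalar identity $j_{m}^{(3)}=J_{m}^{(3)}+4J_{m-1}^{(3)}+4J_{m-2}^{(3)}$ applied at shifted indices. Your justification of that scalar identity (induction with the values $J_{-1}^{(3)}=0$, $J_{-2}^{(3)}=\tfrac12$, or the chain $J_{m}^{(3)}+4J_{m-1}^{(3)}+4J_{m-2}^{(3)}-4J_{m}^{(3)}=J_{m}^{(3)}-8J_{m-3}^{(3)}=V_{m}^{(3)}=j_{m}^{(3)}-4J_{m}^{(3)}$) is actually cleaner than the paper's, which cites (\ref{e5}) with a garbled restatement ($4J_{n-1}^{(3)}+4J_{n-1}^{(3)}$) that does not immediately yield the identity; your observation that (\ref{p15}) and (\ref{p16}) are equivalent via the recurrence (\ref{ec:5}) also spares the ``same manner'' verification the paper waves at for (\ref{p16}). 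For (\ref{p17}) you genuinely diverge: the paper multiplies the explicit matrix $M_{j,1}^{(3)}$ against the entrywise form of $M_{J,n}^{(3)}$ and regroups the product as $M_{J,n+2}^{(3)}+3M_{J,n}^{(3)}+2M_{J,n-1}^{(3)}$, whereas you invoke (\ref{p12}) to replace the left side by $M_{j,n+1}^{(3)}$, apply (\ref{p16}) at index $n+1$, and cancel against the recurrence (\ref{d1}); this is shorter, avoids all entrywise computation, and is not circular since Lemma \ref{lem1} is proved independently of Theorem \ref{t5}. The only cost of your approach is the need to be explicit about the low-index cases $n<3$ (and the negative-index conventions), which you correctly flag; the paper silently relies on the same extension.
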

\begin{proof}
First, here, we will just prove (\ref{p15}) and (\ref{p17}) since (\ref{p16})ncan be dealt with in the same manner. So, if we consider the right-hand side of equation (\ref{p15}) and use Theorem \ref{t2}, we get
\begin{align*}
M_{J,n}^{(3)}&+4M_{J,n-1}^{(3)}+4M_{J,n-2}^{(3)}\\
&=\left[
\begin{array}{ccc}
J_{n+1}^{(3)}& J_{n}^{(3)}+2J_{n-1}^{(3)}& 2J_{n}^{(3)} \\ 
J_{n}^{(3)}&J_{n-1}^{(3)}+2J_{n-2}^{(3)}& 2J_{n-1}^{(3)}\\ 
J_{n-1}^{(3)} & J_{n-2}^{(3)}+2J_{n-3}^{(3)}&2J_{n-2}^{(3)}%
\end{array}%
\right]+4\left[
\begin{array}{ccc}
J_{n}^{(3)}& J_{n-1}^{(3)}+2J_{n-2}^{(3)}& 2J_{n-1}^{(3)} \\ 
J_{n-1}^{(3)}&J_{n-2}^{(3)}+2J_{n-3}^{(3)}& 2J_{n-2}^{(3)}\\ 
J_{n-2}^{(3)} & J_{n-3}^{(3)}+2J_{n-4}^{(3)}&2J_{n-3}^{(3)}%
\end{array}%
\right]\\
&\ \ +4\left[
\begin{array}{ccc}
J_{n-1}^{(3)}& J_{n-2}^{(3)}+2J_{n-3}^{(3)}& 2J_{n-2}^{(3)} \\ 
J_{n-2}^{(3)}&J_{n-3}^{(3)}+2J_{n-4}^{(3)}& 2J_{n-3}^{(3)}\\ 
J_{n-3}^{(3)} & J_{n-4}^{(3)}+2J_{n-5}^{(3)}&2J_{n-4}^{(3)}%
\end{array}%
\right]\\
&=\left[
\begin{array}{ccc}
j_{n+1}^{(3)}& j_{n}^{(3)}+2j_{n-1}^{(3)}& 2j_{n}^{(3)} \\ 
j_{n}^{(3)}&j_{n-1}^{(3)}+2j_{n-2}^{(3)}& 2j_{n-1}^{(3)}\\ 
j_{n-1}^{(3)} & j_{n-2}^{(3)}+2j_{n-3}^{(3)}&2j_{n-2}^{(3)}%
\end{array}%
\right]\\
&=M_{j,n}^{(3)}.
\end{align*}
From Eq. (\ref{e5}), $j_{n}^{(3)}=J_{n}^{(3)}+4J_{n-1}^{(3)}+4J_{n-1}^{(3)}$, as required in (\ref{p15}).

Second, let us consider the left-hand side of Eq. (\ref{p17}). Using Theorem \ref{t2}, we write $$M_{j,1}^{(3)}M_{J,n}^{(3)}=\left[
\begin{array}{ccc}
j_{2}^{(3)}& j_{1}^{(3)}+2j_{0}^{(3)}& 2j_{1}^{(3)} \\ 
j_{1}^{(3)}&j_{0}^{(3)}+2j_{-1}^{(3)}& 2j_{0}^{(3)}\\ 
j_{0}^{(3)} & j_{-1}^{(3)}+2j_{-2}^{(3)}&2j_{-1}^{(3)}%
\end{array}%
\right]\left[
\begin{array}{ccc}
J_{n+1}^{(3)}& J_{n}^{(3)}+2J_{n-1}^{(3)}& 2J_{n}^{(3)} \\ 
J_{n}^{(3)}&J_{n-1}^{(3)}+2J_{n-2}^{(3)}& 2J_{n-1}^{(3)}\\ 
J_{n-1}^{(3)} & J_{n-2}^{(3)}+2J_{n-3}^{(3)}&2J_{n-2}^{(3)}%
\end{array}%
\right].$$ From matrix production, we have
\begin{align*}
M_{j,1}^{(3)}M_{J,n}^{(3)}&=\left[
\begin{array}{ccc}
5& 5& 2\\ 
1&4& 4\\ 
2 &-1&2%
\end{array}%
\right]\left[
\begin{array}{ccc}
J_{n+1}^{(3)}& J_{n}^{(3)}+2J_{n-1}^{(3)}& 2J_{n}^{(3)} \\ 
J_{n}^{(3)}&J_{n-1}^{(3)}+2J_{n-2}^{(3)}& 2J_{n-1}^{(3)}\\ 
J_{n-1}^{(3)} & J_{n-2}^{(3)}+2J_{n-3}^{(3)}&2J_{n-2}^{(3)}%
\end{array}%
\right]\\
&=\left[
\begin{array}{ccc}
J_{n+3}^{(3)}& J_{n+2}^{(3)}+2J_{n+1}^{(3)}& 2J_{n+2}^{(3)} \\ 
J_{n+2}^{(3)}&J_{n+1}^{(3)}+2J_{n}^{(3)}& 2J_{n+1}^{(3)}\\ 
J_{n+1}^{(3)} & J_{n}^{(3)}+2J_{n-1}^{(3)}&2J_{n}^{(3)}%
\end{array}%
\right]+3\left[
\begin{array}{ccc}
J_{n+1}^{(3)}& J_{n}^{(3)}+2J_{n-1}^{(3)}& 2J_{n}^{(3)} \\ 
J_{n}^{(3)}&J_{n-1}^{(3)}+2J_{n-2}^{(3)}& 2J_{n-1}^{(3)}\\ 
J_{n-1}^{(3)} & J_{n-2}^{(3)}+2J_{n-3}^{(3)}&2J_{n-2}^{(3)}%
\end{array}%
\right]\\
&\ \ +2\left[
\begin{array}{ccc}
J_{n}^{(3)}& J_{n-1}^{(3)}+2J_{n-2}^{(3)}& 2J_{n-1}^{(3)} \\ 
J_{n-1}^{(3)}&J_{n-2}^{(3)}+2J_{n-3}^{(3)}& 2J_{n-2}^{(3)}\\ 
J_{n-2}^{(3)} & J_{n-3}^{(3)}+2J_{n-4}^{(3)}&2J_{n-3}^{(3)}%
\end{array}%
\right]\\
&=M_{J,n+2}^{(3)}+3M_{J,n}^{(3)}+2M_{J,n-1}^{(3)}.
\end{align*}
Hence the result.
\end{proof}

\begin{theorem}\label{t6}
For $m,n\in \mathbb{N}$, the following properties hold.
\begin{equation}\label{p20}
M_{J,m}^{(3)}M_{j,n+1}^{(3)}=M_{j,n+1}^{(3)}M_{J,m}^{(3)}=M_{j,m+n+1}^{(3)},
\end{equation}
\begin{equation}\label{p21}
\left(M_{j,n+1}^{(3)}\right)^{m}=\left(M_{j,1}^{(3)}\right)^{m}M_{J,mn}^{(3)}.
\end{equation}
\end{theorem}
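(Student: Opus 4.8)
The plan is to derive both identities from the commutativity lemma (Lemma \ref{lem1}) together with the definitions of the matrix sequences, reducing everything to the known scalar-level relation $M_{j,n+1}^{(3)} = M_{j,1}^{(3)} M_{J,n}^{(3)}$ from \eqref{p12}. For \eqref{p20}, the first step is to use \eqref{p12} to rewrite $M_{j,n+1}^{(3)} = M_{j,1}^{(3)} M_{J,n}^{(3)}$, so that
\begin{align*}
M_{J,m}^{(3)} M_{j,n+1}^{(3)} &= M_{J,m}^{(3)} M_{j,1}^{(3)} M_{J,n}^{(3)}.
\end{align*}
Then apply \eqref{p12} again (in the form $M_{J,m}^{(3)} M_{j,1}^{(3)} = M_{j,1}^{(3)} M_{J,m}^{(3)} = M_{j,m+1}^{(3)}$) and the group-like law \eqref{p10} for the $M_J$ sequence, namely $M_{J,m}^{(3)} M_{J,n}^{(3)} = M_{J,m+n}^{(3)}$, to collapse the product to $M_{j,1}^{(3)} M_{J,m+n}^{(3)} = M_{j,m+n+1}^{(3)}$. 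The equality with the factors in the opposite order follows symmetrically, again invoking the commutativity statements in Lemma \ref{lem1}; one can also note that every matrix in sight is a polynomial in $M_{J,1}^{(3)}$ (by \eqref{p10} and \eqref{p13}), which makes all these products commute automatically.

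For \eqref{p21}, the natural approach is induction on $m$. The base case $m=1$ is exactly \eqref{p12} with $mn = n$. For the inductive step, assuming $\left(M_{j,n+1}^{(3)}\right)^{m} = \left(M_{j,1}^{(3)}\right)^{m} M_{J,mn}^{(3)}$, I would write
\begin{align*}
\left(M_{j,n+1}^{(3)}\right)^{m+1} &= \left(M_{j,n+1}^{(3)}\right)^{m} M_{j,n+1}^{(3)} = \left(M_{j,1}^{(3)}\right)^{m} M_{J,mn}^{(3)} M_{j,n+1}^{(3)},
\end{align*}
and then use \eqref{p20} (just proved, with $m$ replaced by $mn$) to turn $M_{J,mn}^{(3)} M_{j,n+1}^{(3)}$ into $M_{j,mn+n+1}^{(3)} = M_{j,(m+1)n+1}^{(3)}$. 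Finally, one more application of \eqref{p12} in the form $M_{j,(m+1)n+1}^{(3)} = M_{j,1}^{(3)} M_{J,(m+1)n}^{(3)}$ gives $\left(M_{j,1}^{(3)}\right)^{m+1} M_{J,(m+1)n}^{(3)}$, completing the induction.

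The only genuine subtlety is bookkeeping with commutativity: one must be careful that all the rearrangements of products are licensed, since matrix multiplication is not commutative in general. This is not a real obstacle here because Lemma \ref{lem1} already records the precise commuting relations needed ($M_J$ with $M_J$, $M_j$ with $M_j$, and $M_{j,1}^{(3)}$ or $M_{J,1}^{(3)}$ with any $M_{J,n}^{(3)}$), and from \eqref{p10}, \eqref{p12}, \eqref{p13} every term in both statements is a power-series-like product in the single matrix $M_{J,1}^{(3)}$ together with $M_{j,1}^{(3)}$, with $M_{j,1}^{(3)}$ itself commuting past all $M_{J,k}^{(3)}$; hence all the products that appear are mutually commutative. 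I expect the write-up to be short, with the induction for \eqref{p21} being the part that requires the most care in indexing.
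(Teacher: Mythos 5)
Your proposal is correct and follows essentially the same route as the paper: both reduce \eqref{p20} to the factorization $M_{j,n+1}^{(3)}=M_{j,1}^{(3)}M_{J,n}^{(3)}$ combined with the addition law \eqref{p10}, and both prove \eqref{p21} by the same induction on $m$. The only cosmetic difference is that the paper justifies commuting $M_{j,1}^{(3)}$ past $M_{J,m}^{(3)}$ by expanding $M_{j,1}^{(3)}=2M_{J,2}^{(3)}-M_{J,1}^{(3)}+2M_{J,0}^{(3)}$ via \eqref{p16}, whereas you invoke the commutativity already recorded in \eqref{p12}; both are legitimate.
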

\begin{proof}
(\ref{p20}): Let us consider the left-hand side of equation (\ref{p20}) and Lemma \ref{lem1} and Theorem \ref{t5}. We have
\begin{align*}
M_{J,m}^{(3)}M_{j,n+1}^{(3)}&=M_{J,m}^{(3)}M_{j,1}^{(3)}M_{J,n}^{(3)}\\
&=M_{J,m}^{(3)}\left(2M_{J,2}^{(3)}-M_{J,1}^{(3)}+2M_{J,0}^{(3)},\right)M_{J,n}^{(3)}\\
&=2M_{J,m+n+2}^{(3)}-M_{J,m+n+1}^{(3)}+2M_{J,m+n}^{(3)}\\
&=\left(2M_{J,2}^{(3)}-M_{J,1}^{(3)}+2M_{J,0}^{(3)},\right)M_{J,m+n}^{(3)}.
\end{align*}
Moreover, from Eq. (\ref{p16}) in Theorem \ref{t5}, we obtain $$M_{J,m}^{(3)}M_{j,n+1}^{(3)}=M_{j,1}^{(3)}M_{J,m}^{(3)}M_{J,n}^{(3)}=M_{j,m+1}^{(3)}M_{J,m}^{(3)}.$$ Also, from Lemma \ref{lem1}, it is seen that $M_{J,m}^{(3)}M_{j,n+1}^{(3)}=M_{j,m+n+1}^{(3)}$ which finishes the proof of (\ref{p20}).

(\ref{p21}): To prove equation (\ref{p21}), let us follow induction steps on $m$. For $m=1$, the proof is clear by Lemma \ref{lem1}. Now, assume that it is true for all positive integers $m$, that is, $\left(M_{j,n+1}^{(3)}\right)^{m}=\left(M_{j,1}^{(3)}\right)^{m}M_{J,mn}^{(3)}$. 

Therefore, we have to show that it is true for $m+1$. If we multiply this $m$-th step by $M_{j,n+1}^{(3)}$ on both sides from the right, then we have
\begin{align*}
\left(M_{j,n+1}^{(3)}\right)^{m+1}&=\left(M_{j,1}^{(3)}\right)^{m}M_{J,mn}^{(3)}M_{j,n+1}^{(3)}\\
&=\left(M_{j,1}^{(3)}\right)^{m}M_{J,mn}^{(3)}M_{j,1}^{(3)}M_{J,n}^{(3)}\\
&=\left(M_{j,1}^{(3)}\right)^{m}M_{j,1}^{(3)}M_{J,mn}^{(3)}M_{J,n}^{(3)}\\
&=\left(M_{j,1}^{(3)}\right)^{m+1}M_{J,mn+n}^{(3)}\\
&=\left(M_{j,1}^{(3)}\right)^{m+1}M_{J,(m+1)n}^{(3)}
\end{align*}
which finishes the induction and gives the proof of (\ref{p21}).
\end{proof}

\begin{corollary}
For $n\geq 0$, by taking $m=2$ and $m=3$ in the Eq. (\ref{p21}) given in Theorem \ref{t6}, we obtain
\begin{equation}\label{p22}
\left(M_{j,n+1}^{(3)}\right)^{2}=\left(M_{j,1}^{(3)}\right)^{2}M_{J,2n}^{(3)}=M_{j,1}^{(3)}M_{j,2n+1}^{(3)},
\end{equation}
\begin{equation}\label{p23}
\left(M_{j,n+1}^{(3)}\right)^{3}=\left(M_{j,1}^{(3)}\right)^{3}M_{J,3n}^{(3)}=\left(M_{j,1}^{(3)}\right)^{2}M_{j,3n+1}^{(3)}.
\end{equation}
\end{corollary}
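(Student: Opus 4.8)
The final statement to prove is the Corollary, which specializes Eq.~(\ref{p21}) at $m=2$ and $m=3$. The plan is to treat each identity as a direct substitution into the two already-established results, namely Eq.~(\ref{p21}) from Theorem~\ref{t6} and the product formula Eq.~(\ref{p14}) (together with Eq.~(\ref{p13})) from Lemma~\ref{lem1}. Since Eq.~(\ref{p21}) reads $\left(M_{j,n+1}^{(3)}\right)^{m}=\left(M_{j,1}^{(3)}\right)^{m}M_{J,mn}^{(3)}$, putting $m=2$ immediately gives $\left(M_{j,n+1}^{(3)}\right)^{2}=\left(M_{j,1}^{(3)}\right)^{2}M_{J,2n}^{(3)}$ and putting $m=3$ gives $\left(M_{j,n+1}^{(3)}\right)^{3}=\left(M_{j,1}^{(3)}\right)^{3}M_{J,3n}^{(3)}$, so the left-hand equalities in (\ref{p22}) and (\ref{p23}) are literally the $m=2,3$ cases and require no further work.

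For the remaining equalities I would peel off one factor of $M_{j,1}^{(3)}$ at a time using Eq.~(\ref{p13}), $M_{J,1}^{(3)}M_{j,k}^{(3)}=M_{j,k+1}^{(3)}$, combined with the commutativity assertions of Lemma~\ref{lem1}. Concretely, for (\ref{p22}) I would write $\left(M_{j,1}^{(3)}\right)^{2}M_{J,2n}^{(3)}=M_{j,1}^{(3)}\left(M_{j,1}^{(3)}M_{J,2n}^{(3)}\right)$ and apply Eq.~(\ref{p12}), which says $M_{j,1}^{(3)}M_{J,n}^{(3)}=M_{j,n+1}^{(3)}$ with $n$ replaced by $2n$, to collapse the inner product to $M_{j,2n+1}^{(3)}$, yielding $M_{j,1}^{(3)}M_{j,2n+1}^{(3)}$ as claimed. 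For (\ref{p23}) the same manoeuvre is applied twice: $\left(M_{j,1}^{(3)}\right)^{3}M_{J,3n}^{(3)}=\left(M_{j,1}^{(3)}\right)^{2}\left(M_{j,1}^{(3)}M_{J,3n}^{(3)}\right)=\left(M_{j,1}^{(3)}\right)^{2}M_{j,3n+1}^{(3)}$, again by Eq.~(\ref{p12}) with the index shifted to $3n$. One could alternatively route through Eq.~(\ref{p14}), $M_{J,n}^{(3)}M_{j,n+1}^{(3)}=M_{j,2n+1}^{(3)}$, but the Eq.~(\ref{p12}) route is cleaner because it keeps the powers of $M_{j,1}^{(3)}$ intact.

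There is essentially no hard step here: the corollary is a bookkeeping exercise in index arithmetic on top of Theorem~\ref{t6} and Lemma~\ref{lem1}. The only point requiring a modicum of care is making sure the commutativity relations in Lemma~\ref{lem1} are invoked with the correct pair of indices each time a factor $M_{j,1}^{(3)}$ is moved past a block of the form $M_{J,\ell}^{(3)}$ or $M_{j,\ell}^{(3)}$ — in particular Eq.~(\ref{p12}) guarantees $M_{j,1}^{(3)}$ commutes with every $M_{J,\ell}^{(3)}$, and Eq.~(\ref{p11}) that the $M_{j,\cdot}^{(3)}$ commute among themselves, so no ordering ambiguity ever obstructs the reductions. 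I would present the proof as two short displayed chains of equalities, one for (\ref{p22}) and one for (\ref{p23}), each annotated with the equation number from Lemma~\ref{lem1} or Theorem~\ref{t6} being used, and conclude.

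\begin{proof}
Both identities follow by specializing Eq.~(\ref{p21}) and then applying Lemma~\ref{lem1}.

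Taking $m=2$ in Eq.~(\ref{p21}) gives at once $\left(M_{j,n+1}^{(3)}\right)^{2}=\left(M_{j,1}^{(3)}\right)^{2}M_{J,2n}^{(3)}$. Moreover, using Eq.~(\ref{p12}) with $n$ replaced by $2n$,
\begin{align*}
\left(M_{j,1}^{(3)}\right)^{2}M_{J,2n}^{(3)}&=M_{j,1}^{(3)}\left(M_{j,1}^{(3)}M_{J,2n}^{(3)}\right)=M_{j,1}^{(3)}M_{j,2n+1}^{(3)},
\end{align*}
which establishes Eq.~(\ref{p22}).

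Taking $m=3$ in Eq.~(\ref{p21}) gives $\left(M_{j,n+1}^{(3)}\right)^{3}=\left(M_{j,1}^{(3)}\right)^{3}M_{J,3n}^{(3)}$. Applying Eq.~(\ref{p12}) with $n$ replaced by $3n$,
\begin{align*}
\left(M_{j,1}^{(3)}\right)^{3}M_{J,3n}^{(3)}&=\left(M_{j,1}^{(3)}\right)^{2}\left(M_{j,1}^{(3)}M_{J,3n}^{(3)}\right)=\left(M_{j,1}^{(3)}\right)^{2}M_{j,3n+1}^{(3)},
\end{align*}
which establishes Eq.~(\ref{p23}).
\end{proof}
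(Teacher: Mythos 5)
Your proposal is correct and follows essentially the same route as the paper, which states the corollary as an immediate specialization of Eq.~(\ref{p21}) at $m=2,3$; your use of Eq.~(\ref{p12}) with $n$ replaced by $2n$ and $3n$ to obtain the second equalities simply makes explicit a step the paper leaves implicit.
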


\begin{corollary}
For $n \in \mathbb{N}$, we have the following result
\begin{equation}\label{p24}
\begin{aligned}
\left(j_{n+1}^{(3)}\right)^{2}+\left(j_{n}^{(3)}\right)^{2}+4j_{n}^{(3)}j_{n-1}^{(3)}&=34J_{2n+1}^{(3)}+43J_{2n}^{(3)}+34J_{2n-1}^{(3)}\\
&=5j_{2n+2}^{(3)}+5j_{2n+1}^{(3)}+2j_{2n}^{(3)}.
\end{aligned}
\end{equation}
\end{corollary}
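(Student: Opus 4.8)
The plan is to read off the identity directly from the matrix relation (\ref{p22}), namely $\left(M_{j,n+1}^{(3)}\right)^{2}=M_{j,1}^{(3)}M_{j,2n+1}^{(3)}$, by comparing the $(1,1)$ entries of both sides. First I would use Theorem \ref{t2} to write $M_{j,n+1}^{(3)}$ explicitly; its first row is $\left(j_{n+2}^{(3)},\ j_{n+1}^{(3)}+2j_{n}^{(3)},\ 2j_{n+1}^{(3)}\right)$ and its first column is $\left(j_{n+2}^{(3)},\ j_{n+1}^{(3)},\ j_{n}^{(3)}\right)^{T}$. Hence the $(1,1)$ entry of $\left(M_{j,n+1}^{(3)}\right)^{2}$ is
\[
j_{n+2}^{(3)}j_{n+2}^{(3)}+\left(j_{n+1}^{(3)}+2j_{n}^{(3)}\right)j_{n+1}^{(3)}+2j_{n+1}^{(3)}j_{n}^{(3)}=\left(j_{n+2}^{(3)}\right)^{2}+\left(j_{n+1}^{(3)}\right)^{2}+4j_{n+1}^{(3)}j_{n}^{(3)},
\]
which, after the reindexing $n\mapsto n-1$ built into the statement, is exactly the left-hand side of (\ref{p24}).

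Next I would compute the $(1,1)$ entry of the right-hand side of (\ref{p22}) in two ways, giving the two right-hand expressions in (\ref{p24}). For the first form I use $\left(M_{j,1}^{(3)}\right)^{2}M_{J,2n}^{(3)}$: a short computation (or the initial data $M_{j,1}^{(3)}$ together with $M_{j,2}^{(3)}=M_{j,1}^{(3)}M_{J,1}^{(3)}$) shows that the first row of $\left(M_{j,1}^{(3)}\right)^{2}$ is some fixed integer vector, and multiplying it against the first column $\left(J_{2n+1}^{(3)},\ J_{2n}^{(3)},\ J_{2n-1}^{(3)}\right)^{T}$ of $M_{J,2n}^{(3)}$ (from Theorem \ref{t2}) produces a combination $aJ_{2n+1}^{(3)}+bJ_{2n}^{(3)}+cJ_{2n-1}^{(3)}$; I would check that $(a,b,c)=(34,43,34)$. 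Here one must be careful to use the correct first column of $M_{J,2n}^{(3)}$, since the matrix in Theorem \ref{t2} is not symmetric. For the second form I use $M_{j,1}^{(3)}M_{j,2n+1}^{(3)}$ instead: the first row of $M_{j,1}^{(3)}$ is $(5,5,2)$ and the first column of $M_{j,2n+1}^{(3)}$ is $\left(j_{2n+2}^{(3)},\ j_{2n+1}^{(3)},\ j_{2n}^{(3)}\right)^{T}$, giving immediately $5j_{2n+2}^{(3)}+5j_{2n+1}^{(3)}+2j_{2n}^{(3)}$.

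Finally I would assemble the three equal scalars into the single displayed chain. The only genuine obstacle is bookkeeping: one must fix a consistent convention for which entry is extracted and then correctly transcribe the relevant row and column of each $3\times 3$ matrix from Theorem \ref{t2}, taking care with the shifted indices and with the asymmetry of $M_{J,k}^{(3)}$ and $M_{j,k}^{(3)}$; alternatively, consistency of the two right-hand forms can be cross-checked against each other using (\ref{p15}) or (\ref{p16}) to convert between $j$'s and $J$'s, which also serves as a sanity check on the coefficients $34,43,34$.
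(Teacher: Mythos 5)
Your approach is exactly the paper's: extract the $(1,1)$ entry from each member of (\ref{p22}) using Theorem \ref{t2}, and all three of your individual entry computations are correct --- the $(1,1)$ entry of $\bigl(M_{j,n+1}^{(3)}\bigr)^{2}$ is $\bigl(j_{n+2}^{(3)}\bigr)^{2}+\bigl(j_{n+1}^{(3)}\bigr)^{2}+4j_{n+1}^{(3)}j_{n}^{(3)}$, the first row of $\bigl(M_{j,1}^{(3)}\bigr)^{2}$ is indeed $(34,43,34)$, and the $(1,1)$ entry of $M_{j,1}^{(3)}M_{j,2n+1}^{(3)}$ is $5j_{2n+2}^{(3)}+5j_{2n+1}^{(3)}+2j_{2n}^{(3)}$.

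The gap is in the final assembly: you apply the reindexing $n\mapsto n-1$ to the left member only, while leaving the two right members at index $n$. That is not a harmless bookkeeping choice --- it breaks the chain of equalities. What your computation actually proves is
\[
\bigl(j_{n+2}^{(3)}\bigr)^{2}+\bigl(j_{n+1}^{(3)}\bigr)^{2}+4j_{n+1}^{(3)}j_{n}^{(3)}=34J_{2n+1}^{(3)}+43J_{2n}^{(3)}+34J_{2n-1}^{(3)}=5j_{2n+2}^{(3)}+5j_{2n+1}^{(3)}+2j_{2n}^{(3)},
\]
and if you shift the left member down by one you must shift the right members to $34J_{2n-1}^{(3)}+43J_{2n-2}^{(3)}+34J_{2n-3}^{(3)}$ and $5j_{2n}^{(3)}+5j_{2n-1}^{(3)}+2j_{2n-2}^{(3)}$ as well. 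Indeed, (\ref{p24}) as printed fails numerically: at $n=1$ its left side is $5^{2}+1^{2}+4\cdot 1\cdot 2=34$, while both right sides equal $145$; and $145$ is precisely $\bigl(j_{3}^{(3)}\bigr)^{2}+\bigl(j_{2}^{(3)}\bigr)^{2}+4j_{2}^{(3)}j_{1}^{(3)}$, the unshifted quantity. So there is no ``reindexing built into the statement''; the statement carries an off-by-one error on its left-hand side, and the proof should either correct the left member to $\bigl(j_{n+2}^{(3)}\bigr)^{2}+\bigl(j_{n+1}^{(3)}\bigr)^{2}+4j_{n+1}^{(3)}j_{n}^{(3)}$ or shift all three members consistently, rather than absorb the discrepancy by shifting one side only.
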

\begin{proof}
The proof can be easily seen by the coefficient in the first row and column of the matrix $\left(M_{j,n+1}^{(3)}\right)^{2}$ in (\ref{p22}) and the Eq. (\ref{d2}).
\end{proof}
\section{Conclusions}
In this paper, we study a generalization of the Jacobsthal and Jacobsthal-Lucas matrix sequences. Particularly, we define the third-order Jacobsthal and third-order Jacobsthal-Lucas matrix sequences, and we find some combinatorial identities. As seen in \cite{Cook-Bac} one way to generalize the Jacobsthal recursion is as follows $$J_{n+r}^{(r)}=\sum_{k=0}^{r-1}J_{n+r-k}^{(r)}+2J_{n}^{(r)},$$
with $n\geq 0$ and initial conditions $J_{k}^{(r)}$, for $k=0,1,...,r-2$ and $J_{r-1}^{(r)}=1$, has characteristic equation $(x-2)(x^{r-1}+x^{r-2}+\cdots +1)=0$ with eigenvalues 2 and $\omega_{k}=e^{\frac{2\pi i m}{r}}$, for $k=0,1,...,r-1$. It would be interesting to introduce the higher order Jacobsthal and Jacobsthal-Lucas matrix sequences. Further investigations for these and other methods useful in discovering identities for the higher order Jacobsthal and Jacobsthal-Lucas sequences will be addressed in a future paper.


\end{document}